\documentclass[11pt]{article}
\usepackage{amsmath,amssymb,amsthm}
\usepackage[margin=2cm]{geometry}
\usepackage{url}
\newtheorem{theorem}{Theorem}
\newtheorem{lemma}{Lemma}
\newtheorem{remark}{Remark}

\newcommand{\inj}{\operatorname{inj}}
\newcommand{\Ric}{\operatorname{Ric}}
\newcommand{\Rm}{\operatorname{Rm}}
\newcommand{\secop}{\operatorname{sec}}
\newcommand{\conjrad}{\operatorname{conj\,rad}}
\newcommand{\vol}{\operatorname{vol}}
\newcommand{\dist}{\operatorname{dist}}

\title{Bi-Lipschitz Smoothing under Ricci and Injectivity Bounds}
\author{Maja Gw\'o\'zd\'z\\University of Zurich \& ETH Z\"{u}rich, Zurich, Switzerland\\\texttt{mgwozdz@ethz.ch}}
\date{}

\begin{document}
\maketitle

\begin{abstract}
We prove that a complete Riemannian manifold with a positive
uniform lower bound on injectivity radius and a positive uniform lower bound on
Ricci curvature admits an $L^\infty$-close (bi-Lipschitz) smooth metric with
two-sided Ricci curvature bounds and a uniform positive lower bound on
injectivity radius. This answers Question~2 in the
Morgan--Pansu list of open problems from the conference \emph{Modern Trends in
Differential Geometry} (S\~ao Paulo, 2018), proposed by L.~Bandara.
In the proof, we rely on controlled smoothing \cite{PWY} with Croke's universal
local volume lower bound \cite{Croke} and the Cheeger--Gromov--Taylor
injectivity radius estimate \cite{AM}.
\end{abstract}

\section{Introduction}

In this note, we work with smooth manifolds without boundary. Let $(M^n,g)$ be a complete Riemannian manifold with $\inj(M,g)\ge \ell>0$ and
$\Ric_g\ge k\,g$ for some $k>0$. Our goal is to form
a bi-Lipschitz nearby smooth metric $h$ with a two-sided Ricci bound
and a uniform positive injectivity radius bound. More precisely, we solve the following problem.

\medskip\noindent
\textbf{Question~2 (due to Bandara \cite{MP}).}
\textit{For every \(\ell, k > 0\), there exist \(C, L, K > 0\) with the following effect. Let \((M, g)\) be a complete Riemannian manifold with injectivity radius \(inj(M, g) \ge \ell\) and Ricci curvature \(Ric(g) \ge k\). Then there exists a metric \(h\) on \(M\) with \(inj(M, h) \ge L\) and \(|Ric(h)| \le K\) such that}
\[
\frac{1}{C} g \le h \le C g.
\]

\noindent The proof below is built on known results, namely,
controlled smoothing \cite{PWY}, Croke's universal local volume lower bound \cite{Croke}, and the Cheeger--Gromov--Taylor injectivity radius estimate \cite{AM}.

\section{Main result}

\begin{theorem}[Smoothing under $\inj$ and $\Ric$ lower bounds]\label{thm:main}
We restate Question~2 in more detail. Let us fix an integer $n\ge 2$. For every $\ell>0$ and $k>0$, there exist constants
$C,L,K>0$ (that depend only on $n,\ell,k$) with the following property.

\noindent Let $(M^n,g)$ be a complete Riemannian manifold that satisfies:
\[
\inj(M,g)\ge \ell,
\qquad
\Ric_g \ge k\, g \quad\text{(as quadratic forms).}
\]
There exists a smooth complete Riemannian metric $h$ on $M$ with:
\[
\frac1C\,g \le h \le C\,g,
\qquad
\inj(M,h)\ge L,
\qquad
|\Ric_h|_h\le K .
\]
Here $\inj(M,g):=\inf_{p\in M}\inj_g(p)$ and $|\cdot|_h$ denotes the pointwise norm
computed using $h$.
\end{theorem}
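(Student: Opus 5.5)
The plan is to reduce everything to the controlled smoothing theorem of \cite{PWY}. Then I would recover the injectivity radius of the smoothed metric from a volume lower bound, using Croke \cite{Croke} together with Cheeger--Gromov--Taylor \cite{AM}. As a preliminary observation, Bonnet--Myers applied to $\Ric_g\ge k\,g$ with $k>0$ shows that $M$ is compact with $\operatorname{diam}(M,g)\le \pi\sqrt{(n-1)/k}$. Compactness makes completeness of any metric $h$ bi-Lipschitz to $g$ automatic, and it rules out any issues with applying the smoothing globally. I will not otherwise need the diameter bound; all constants should come from $n,\ell,k$ alone.

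\emph{Step 1 (smoothing).} Since $\inj(M,g)\ge\ell$, the conjugate radius satisfies $\conjrad(M,g)\ge\ell$. Also $\Ric_g\ge k\,g\ge -(n-1)\Lambda\, g$ for any $\Lambda\ge 0$. These are the hypotheses of the controlled smoothing in \cite{PWY}: a Ricci lower bound plus a conjugate radius lower bound. Fix $\varepsilon=1/2$. The smoothing, run for a short time $T=T(n,\ell,k)$, produces a smooth metric $h$ with
\[
(1-\varepsilon)g\le h\le (1+\varepsilon)g,\qquad |\secop_h|\le K_0(n,\ell,k).
\]
This gives the bi-Lipschitz bound with $C=2$. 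It also gives $|\Ric_h|_h\le (n-1)K_0=:K$, because a two-sided sectional bound controls every Ricci eigenvalue. I must check carefully that the version of \cite{PWY} I invoke needs only these lower bounds and no a priori upper Ricci bound. If it instead outputs only a Ricci bound together with a conjugate-radius bound, I would use the sectional-curvature-free form of Cheeger--Gromov--Taylor in Step 3, which needs only a conjugate radius lower bound plus a volume bound.

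\emph{Step 2 (volume lower bound for $h$).} Croke's estimate \cite{Croke} gives $\vol_g B_g(p,r)\ge c_n r^n$ for all $p\in M$ and $r\le \ell/2$. Set $\lambda=(1+\varepsilon)^{1/2}$. From $h\le(1+\varepsilon)g$ we get $B_g(p,r/\lambda)\subset B_h(p,r)$, and from $h\ge (1-\varepsilon)g$ we get $d\vol_h\ge (1-\varepsilon)^{n/2}d\vol_g$. Hence
\[
\vol_h B_h(p,r)\ge (1-\varepsilon)^{n/2}c_n (r/\lambda)^n=:v_0\quad\text{for } r=r_0:=\min\{\ell/2,\pi/(2\sqrt{K_0})\}.
\]
This bound is uniform in $p$.

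\emph{Step 3 (injectivity radius).} Apply the Cheeger--Gromov--Taylor estimate \cite{AM} at each $p$ with $|\secop_h|\le K_0$ and $r_0\le \pi/(2\sqrt{K_0})$. It gives
\[
\inj_h(p)\ge \frac{r_0}{2}\,\frac{\vol_h B_h(p,r_0)}{\vol_h B_h(p,r_0)+V_{-K_0}^n(2r_0)} \ge \frac{r_0}{2}\,\frac{v_0}{v_0+V_{-K_0}^n(2r_0)}=:L>0,
\]
where $V_{-K_0}^n$ is the volume of balls in the model space of curvature $-K_0$. All quantities depend only on $n,\ell,k$. The main obstacle is Step 1, namely matching the exact hypotheses and conclusions of the \cite{PWY} smoothing, in particular whether it delivers sectional or only Ricci control. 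Steps 2 and 3 are routine comparisons once that is settled.
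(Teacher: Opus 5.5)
Your proposal is correct and follows essentially the same route as the paper: Bonnet--Myers for compactness, then Petersen--Wei--Ye smoothing driven by the conjugate-radius and Ricci lower bounds, then Croke's volume bound transferred through the bi-Lipschitz comparison, then Cheeger--Gromov--Taylor. The paper differs only in unpacking the smoothing black box: it rescales to $\inj\ge 1$, passes from weak harmonic $L^{1,p}$ control to weak $C^{0,\alpha}$ control via Morrey, and reads off $|\Rm_{\bar h}|\le\Lambda$ from the $C^{2,\alpha}$ chart bounds.

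Two small corrections are needed. In Step 3, the Cheeger--Gromov--Taylor estimate requires $r\le \pi/(4\sqrt{K_0})$, not $\pi/(2\sqrt{K_0})$, so take $r_0:=\min\{\ell/2,\pi/(4\sqrt{K_0})\}$; the paper, after rescaling, uses $r_*=\min\{1/2,\pi/(8\sqrt{\Lambda})\}$. Also, for the full tensor norm, $|\secop_h|\le K_0$ gives $|\Ric_h|_h\le\sqrt{n}\,(n-1)K_0$ rather than $(n-1)K_0$, which only changes the value of $K$.
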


\begin{lemma}[Metric comparison]\label{lem:bilip}
Let $g,h$ be Riemannian metrics on $M$ and let $C\ge 1$.
If $C^{-1}g \le h \le Cg$, then for all $x,y\in M$, we have:
\[
C^{-1/2} \dist_g(x,y) \le \dist_h(x,y)\le C^{1/2} \dist_g(x,y),
\]
and pointwise on $M$,
\[
C^{-n/2}\,d\vol_g \le d\vol_h \le C^{n/2}\,d\vol_g.
\]
In particular, notice that if $(M,g)$ is complete, then $(M,h)$ is complete.
\end{lemma}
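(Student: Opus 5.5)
The plan is to prove Lemma~\ref{lem:bilip} in three steps: first the length inequality for curves, then the distance bounds, then the volume density bound via eigenvalues, and finally completeness from the distance bounds.

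\emph{Lengths and distances.} For any piecewise $C^1$ curve $\gamma:[a,b]\to M$, the hypothesis gives pointwise
\[
C^{-1}g(\dot\gamma,\dot\gamma)\le h(\dot\gamma,\dot\gamma)\le C\,g(\dot\gamma,\dot\gamma).
\]
Taking square roots and integrating yields
\[
C^{-1/2}L_g(\gamma)\le L_h(\gamma)\le C^{1/2}L_g(\gamma).
\]
Since $\dist_g$ and $\dist_h$ are infima of $L_g$ and $L_h$ over the same class of curves joining $x$ to $y$, taking infima gives both distance inequalities.

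\emph{Volume densities.} At a point $p$, I would choose a $g$-orthonormal basis that diagonalizes $h$ with respect to $g$. The eigenvalues $\lambda_1,\dots,\lambda_n$ of $g^{-1}h$ then all lie in $[C^{-1},C]$. Since $d\vol_h=\sqrt{\det(g^{-1}h)}\,d\vol_g=\bigl(\prod_i\lambda_i\bigr)^{1/2}d\vol_g$, the density bound $C^{-n/2}\le(\prod\lambda_i)^{1/2}\le C^{n/2}$ follows. This is a pointwise statement, so there is no issue with orientability: one works with Riemannian densities.

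\emph{Completeness.} The distance inequalities show that the identity map $(M,\dist_g)\to(M,\dist_h)$ is bi-Lipschitz. Hence the two metrics have the same Cauchy sequences and the same convergent sequences. Completeness of $(M,g)$ therefore transfers to $(M,h)$, as a metric space and hence, by Hopf--Rinow, as a Riemannian manifold.

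No step here is a real obstacle. The only point requiring care is the simultaneous diagonalization in the volume estimate, and that is standard linear algebra.
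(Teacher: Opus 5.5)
Your proposal is correct and follows the paper's proof essentially step for step. Both arguments compare $|v|_h$ with $|v|_g$ pointwise and integrate along curves, bound the density in a $g$-orthonormal frame (your explicit diagonalization just spells out the paper's step $C^{-1}I\le A\le CI\Rightarrow \det A\in[C^{-n},C^{n}]$), and transfer completeness through Cauchy sequences under the bi-Lipschitz identity map.
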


\begin{proof}
For an arbitrary tangent vector $v$, it holds that:
\[
|v|_h^2=h(v,v)\le Cg(v,v)=C|v|_g^2,\qquad
|v|_h^2\ge C^{-1}|v|_g^2,
\]
so $|v|_h\le C^{1/2}|v|_g$ and $|v|_h\ge C^{-1/2}|v|_g$. We now integrate along the curves and obtain the required distance bounds. For the volume forms, we choose a $g$-orthonormal basis so that $g$ is represented by $I$
and $h$ by a positive definite matrix $A$. It follows that $C^{-1}I\le A\le CI$, so we infer that $\det A\in[C^{-n},C^n]$ and $d\vol_h = \sqrt{\det A}\,d\vol_g$. We obtain completeness directly from the distance comparison. Observe that an $h$-Cauchy sequence is $g$-Cauchy, so it converges in $(M,g)$. In particular, it also converges in $(M,h)$ by the reverse comparison, which concludes the proof.
\end{proof}

\begin{lemma}[Constant rescaling]\label{lem:scale}
Let $\lambda>0$ and $\hat g := \lambda g$. It follows that:
\[
\inj(M,\hat g)=\sqrt{\lambda}\,\inj(M,g).
\]
Moreover, note that the Levi--Civita connections of $g$ and $\hat g$ are the same. In particular, we have:
\[
\Ric_{\hat g}=\Ric_g \quad\text{as $(0,2)$-tensors}.
\]
As a result, for an arbitrary $(0,2)$-tensor $T$, the following holds:
\[
|T|_{\hat g}=\lambda^{-1}|T|_g.
\]
\end{lemma}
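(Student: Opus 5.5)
The plan is to prove the three assertions of Lemma~\ref{lem:scale} in order, each by a short direct argument, starting with the connection.

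\textbf{Step 1: the connection.} I would use the Koszul formula
\[
2\hat g(\nabla_XY,Z)=X\hat g(Y,Z)+Y\hat g(X,Z)-Z\hat g(X,Y)+\hat g([X,Y],Z)-\hat g([X,Z],Y)-\hat g([Y,Z],X).
\]
Since $\lambda$ is constant, every term on the right is $\lambda$ times the corresponding term for $g$. So the connection $\hat\nabla$ of $\hat g$ satisfies $\hat g(\hat\nabla_XY,Z)=\lambda g(\nabla^g_XY,Z)=\hat g(\nabla^g_XY,Z)$ for all $Z$, and hence $\hat\nabla=\nabla^g$. Equivalently, $\nabla^g$ is torsion-free and $\nabla^g\hat g=\lambda\nabla^g g=0$, so uniqueness of the Levi--Civita connection gives the same conclusion.

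\textbf{Step 2: the Ricci tensor.} The $(1,3)$ curvature tensor $R(X,Y)Z$ is built from the connection alone, so it is unchanged. The Ricci tensor is the trace $\Ric(Y,Z)=\operatorname{tr}(X\mapsto R(X,Y)Z)$, which requires no metric. Therefore $\Ric_{\hat g}=\Ric_g$ as $(0,2)$-tensors.

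\textbf{Step 3: the injectivity radius.} Geodesics are the curves with $\nabla_{\dot\gamma}\dot\gamma=0$, so $g$ and $\hat g$ have the same geodesics as parametrized curves, and hence the same map $\exp_p$ on $T_pM$. The only change is the norm on $T_pM$: $|v|_{\hat g}=\sqrt\lambda\,|v|_g$. Consequently $\exp_p$ is a diffeomorphism on the $g$-ball of radius $r$ exactly when it is one on the $\hat g$-ball of radius $\sqrt\lambda\, r$. This gives $\inj_{\hat g}(p)=\sqrt\lambda\,\inj_g(p)$, and taking the infimum over $p$ yields the global statement. An alternative route uses distances: $\dist_{\hat g}=\sqrt\lambda\,\dist_g$, so cut points and conjugate points coincide, and the distances to them scale by $\sqrt\lambda$.

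\textbf{Step 4: the norm formula.} In a $g$-orthonormal frame $e_i$, the vectors $\lambda^{-1/2}e_i$ form a $\hat g$-orthonormal frame. Hence
\[
|T|_{\hat g}^2=\sum_{i,j}T(\lambda^{-1/2}e_i,\lambda^{-1/2}e_j)^2=\lambda^{-2}|T|_g^2.
\]
Equivalently, $\hat g^{ij}=\lambda^{-1}g^{ij}$, and two inverse metrics appear in $|T|^2=g^{ia}g^{jb}T_{ij}T_{ab}$.

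\textbf{Main obstacle.} None of these steps is substantive. The only point needing care is the injectivity-radius claim, namely that the two metrics really share one exponential map, including its parametrization. Step 1 is what establishes this.
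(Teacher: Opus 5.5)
Your proposal is correct and follows essentially the same route as the paper. The paper checks invariance of the connection through the Christoffel-symbol formula in coordinates, of which your Koszul argument is the coordinate-free version; it then deduces invariance of the $(1,3)$ curvature and hence of $\Ric$, computes $|T|^2_{\lambda g}$ with two inverse metrics, and gets the injectivity-radius scaling from the scaling of lengths. Your Step~3, which identifies the exponential maps and rescales the balls in $T_pM$, justifies the injectivity-radius claim a little more explicitly than the paper's one-line remark.
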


\begin{proof}
Lengths and distances scale by $\sqrt{\lambda}$ under $g\mapsto \lambda g$, and so does the injectivity radius. For constant rescaling, we have:
\[
\begin{aligned}
\Gamma^k_{ij}(\hat g)
&=\frac12 \hat g^{k\ell}(\partial_i \hat g_{j\ell}+\partial_j\hat g_{i\ell}-\partial_\ell \hat g_{ij})\\
&=\frac12 (\lambda^{-1}g^{k\ell})(\lambda\partial_i g_{j\ell}+\lambda\partial_j g_{i\ell}-\lambda\partial_\ell g_{ij})\\
&=\Gamma^k_{ij}(g),
\end{aligned}
\]
so the connections are the same. Notice that the $(1,3)$ curvature tensor is unchanged, and so is $\Ric$ as a $(0,2)$-tensor. Finally, we observe that $|T|_{\lambda g}^2=(\lambda g)^{ia}(\lambda g)^{jb}T_{ij}T_{ab}=\lambda^{-2}|T|_g^2$.
\end{proof}

\begin{lemma}[Morrey--Sobolev on a unit ball]\label{lem:morrey-sobolev}
Let $p>n$ and set $\alpha:=1-\frac{n}{p}\in(0,1)$. There exists a constant
$C_{\mathrm{M}}=C_{\mathrm{M}}(n,p)>0$ such that for every
$u\in W^{1,p}(B(0,1))$, there exists a representative (denoted by $u$) with
$u\in C^{0,\alpha}(B(0,1))$ and
\[
[u]_{C^{0,\alpha}(B(0,1))}\le C_{\mathrm{M}}\|\nabla u\|_{L^p(B(0,1))}.
\]
\end{lemma}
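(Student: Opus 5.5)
The plan is the classical Morrey argument on the unit ball: first prove the estimate for smooth functions, then pass to general $W^{1,p}$ functions by density. Throughout, $B=B(0,1)$ is convex with $|B|=\omega_n$.

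\textbf{Step 1 (mean-value oscillation bound).} Let $u\in C^1(\overline B)$, and fix $x\in B$ and a ball $B_r\subset B$ (or more generally a convex set $W=B\cap B(x_0,r)$) containing $x$ with diameter $d\le 2r$. Write
\[
u(x)-u(y)=-\int_0^1\nabla u(x+t(y-x))\cdot(y-x)\,dt .
\]
Average this over $y\in W$. Changing variables $z=x+t(y-x)$ and using convexity of $W$ gives
\[
|u(x)-u_W|\le \frac{d^n}{n|W|}\int_W\frac{|\nabla u(z)|}{|x-z|^{n-1}}\,dz .
\]
Apply H\"older with exponents $p$ and $p'=p/(p-1)$. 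Since $p>n$, we have $(n-1)p'<n$, so $|x-z|^{-(n-1)}$ lies in $L^{p'}(W)$, and its $L^{p'}$ norm is bounded by $C(n,p)\,d^{\,n/p'-(n-1)}=C\,d^{\,1-n/p}\cdot d^{\,n/p'-n+n/p}$. Collecting powers yields
\[
|u(x)-u_W|\le C(n,p)\,\frac{d^n}{|W|}\,d^{\,1-n/p}\,\|\nabla u\|_{L^p(W)} .
\]

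\textbf{Step 2 (H\"older seminorm).} Take $x,y\in B$ with $r=|x-y|$. Set $W=B\cap B(x,r)\cap B(y,r)$, or more simply a convex set containing both points of diameter at most $2r$. The key geometric fact is that $|W|\ge c(n)r^n$ uniformly for $x,y\in B$; this is a cone condition for the unit ball. It holds because $B\cap B((x+y)/2,r)$ contains a ball of radius comparable to $r$, since the ball is convex with radius $1\ge r/2$. I expect this lower bound on the intersection volume, uniform up to the boundary, to be the only mildly delicate point. I would handle it by shifting the midpoint toward the origin by $r/4$ and checking that a ball of radius $r/8$ around the shifted point lies in $B$ and within distance $r$ of both $x$ and $y$.

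Given this, the triangle inequality $|u(x)-u(y)|\le|u(x)-u_W|+|u_W-u(y)|$ together with Step 1 gives
\[
|u(x)-u(y)|\le C_{\mathrm M}(n,p)\,r^{\alpha}\,\|\nabla u\|_{L^p(B)} .
\]

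\textbf{Step 3 (density).} Since $B$ is a Lipschitz domain, $C^\infty(\overline B)$ is dense in $W^{1,p}(B)$. Take smooth $u_k\to u$ in $W^{1,p}$. Step 2 gives a uniform H\"older bound on $u_k-u_j$ in terms of $\|\nabla(u_k-u_j)\|_{L^p}$. Step 1 with $W=B$ bounds $|u_k-(u_k)_B|$ uniformly, and the means $(u_k)_B$ converge. Hence $u_k$ is uniformly Cauchy and converges uniformly to a continuous function. This function agrees with $u$ almost everywhere, so it is the desired representative. The estimate then passes to the limit, because both sides are continuous under this convergence.
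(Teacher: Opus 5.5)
Your argument is correct. It is the classical Morrey argument carried out intrinsically on the convex ball, as in Gilbarg--Trudinger (Lemma~7.16/Theorem~7.17). The computations check out. In particular, $n/p'-(n-1)=1-n/p$. Also, the ball $B(m',r/8)$ with $|m'-m|\le r/4$ lies in $B(0,1)\cap B(m,r)$, which gives $d^n/|W|\le 16^n/\omega_n$ uniformly up to the boundary.

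The paper does not prove the lemma; it cites Evans. The standard treatment there handles the domain differently. The oscillation estimate is proved for $C^1$ functions on all of $\mathbb{R}^n$, where the lens $B(x,r)\cap B(y,r)$ trivially has volume comparable to $r^n$. The passage to a bounded $C^1$ domain then goes through an extension operator and density. This produces $\|u^*\|_{C^{0,\gamma}(\bar U)}\le C\|u\|_{W^{1,p}(U)}$, which involves the full Sobolev norm. To extract from that reference the gradient-only seminorm bound actually stated in the lemma, one must also apply it to $u-u_B$ and invoke the Poincar\'e--Wirtinger inequality on the ball.

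Your route avoids both the extension operator and Poincar\'e. It gives the gradient-only bound directly, with an explicit constant. The price is that it uses convexity of the domain in Step~1 and an explicit interior-ball construction in Step~2, whereas the extension route works for any bounded $C^1$ domain.

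Two cosmetic fixes:
\begin{itemize}
\item With $r=|x-y|$, the open set $B\cap B(x,r)\cap B(y,r)$ contains neither $x$ nor $y$. Use your alternative $W=B\cap B(\tfrac{x+y}{2},r)$ instead, which is convex, contains both points, and has diameter at most $2r$.
\item When $|\tfrac{x+y}{2}|<r/4$, take $m'=0$ rather than shifting by $r/4$.
\end{itemize}
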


\begin{proof}
See, for instance, \cite[p.~266]{EvansPDE}.
\end{proof}

\begin{lemma}[Morrey estimate]\label{lem:morrey}
Let $p>n$ and set $\alpha=1-\frac{n}{p}$. Let $C_{\mathrm{M}}=C_{\mathrm{M}}(n,p)$ be the constant from
Lemma~\ref{lem:morrey-sobolev}. It follows that for every $r>0$ and every $u\in W^{1,p}(B(0,r))$,
\[
r^{\alpha}\,[u]_{C^{0,\alpha}(B(0,r))}\ \le\ C_{\mathrm{M}}\, r^{1-\frac{n}{p}}\|\nabla u\|_{L^p(B(0,r))}.
\]
\end{lemma}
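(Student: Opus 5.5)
The plan is to reduce to the unit-ball statement, Lemma~\ref{lem:morrey-sobolev}, by scaling. Given $r>0$ and $u\in W^{1,p}(B(0,r))$, define $v(y):=u(ry)$ for $y\in B(0,1)$. The chain rule for Sobolev functions under the linear change of variables $x=ry$ gives $v\in W^{1,p}(B(0,1))$ with $\nabla v(y)=r\,(\nabla u)(ry)$. Lemma~\ref{lem:morrey-sobolev} then gives a H\"older continuous representative of $v$ with
\[
[v]_{C^{0,\alpha}(B(0,1))}\le C_{\mathrm{M}}\|\nabla v\|_{L^p(B(0,1))}.
\]
Setting $u(x):=v(x/r)$ produces the corresponding representative of $u$.

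The next step is to compute how each side transforms. For the H\"older seminorm, take $x_1\neq x_2$ in $B(0,r)$ and write $x_i=ry_i$. Then
\[
\frac{|u(x_1)-u(x_2)|}{|x_1-x_2|^\alpha}=r^{-\alpha}\frac{|v(y_1)-v(y_2)|}{|y_1-y_2|^\alpha}.
\]
Taking the supremum gives $[u]_{C^{0,\alpha}(B(0,r))}=r^{-\alpha}[v]_{C^{0,\alpha}(B(0,1))}$.

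For the gradient norm, the substitution $x=ry$, $dx=r^n\,dy$ gives
\[
\|\nabla v\|_{L^p(B(0,1))}^p=\int_{B(0,1)} r^p|\nabla u(ry)|^p\,dy=r^{p-n}\|\nabla u\|^p_{L^p(B(0,r))},
\]
so $\|\nabla v\|_{L^p(B(0,1))}=r^{1-n/p}\|\nabla u\|_{L^p(B(0,r))}$.

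Combining the two identities,
\[
r^\alpha[u]_{C^{0,\alpha}(B(0,r))}=[v]_{C^{0,\alpha}(B(0,1))}\le C_{\mathrm{M}}\,r^{1-\frac np}\|\nabla u\|_{L^p(B(0,r))}.
\]
This is exactly the claimed inequality. Since $\alpha=1-n/p$, it is equivalent to the scale-invariant statement $[u]_{C^{0,\alpha}(B(0,r))}\le C_{\mathrm{M}}\|\nabla u\|_{L^p(B(0,r))}$, and the constant is independent of $r$.

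There is no real obstacle here. The only points needing care are that the dilation preserves $W^{1,p}$ and maps representatives to representatives, and keeping the exponents of $r$ straight.
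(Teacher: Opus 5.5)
Your proposal is correct and follows essentially the same route as the paper: rescale via $v(y)=u(ry)$, use $\|\nabla v\|_{L^p(B(0,1))}=r^{1-n/p}\|\nabla u\|_{L^p(B(0,r))}$ and $[v]_{C^{0,\alpha}(B(0,1))}=r^{\alpha}[u]_{C^{0,\alpha}(B(0,r))}$, and apply Lemma~\ref{lem:morrey-sobolev} to $v$. Your additional remarks, that the representative transfers under the dilation and that the inequality is scale-invariant since $\alpha=1-n/p$, are accurate.
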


\begin{proof}
The strategy is to scale to $B(0,1)$. Let us define $u_r(x):=u(rx)$. It follows that:
\begin{align*}
\|\nabla u_r\|_{L^p(B(0,1))} &= r^{1-\frac{n}{p}}\|\nabla u\|_{L^p(B(0,r))},\\
[u_r]_{C^{0,\alpha}(B(0,1))} &= r^{\alpha}[u]_{C^{0,\alpha}(B(0,r))}.
\end{align*}
To conclude the proof, it suffices to apply Lemma~\ref{lem:morrey-sobolev} to $u_r$.
\end{proof}

\begin{lemma}\label{lem:curv-coeff}
Let $U\subset\mathbb{R}^n$ be open and let $m$ be a $C^2$ Riemannian metric on $U$ with coefficients $(m_{ij})$ in the standard coordinates. Let us assume that for some constants $c\ge 1$ and $A,B\ge 0$, it holds that:
\[
c^{-1}\delta \le (m_{ij})\le c\,\delta,\qquad
\sup_U|\partial m_{ij}|\le A,\qquad
\sup_U|\partial^2 m_{ij}|\le B.
\]
We then know that there exists a constant $C_{\mathrm{curv}}=C_{\mathrm{curv}}(n,c)$ such that on $U$,
\[
|\Rm_m|_m \le C_{\mathrm{curv}}\,(B+A^2).
\]
\end{lemma}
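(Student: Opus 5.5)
The plan is to write the Riemann tensor in coordinates and bound each term directly. In the standard coordinates on $U$ we have
\[
R^{l}{}_{ijk}=\partial_j\Gamma^l_{ik}-\partial_k\Gamma^l_{ij}+\Gamma^l_{jp}\Gamma^p_{ik}-\Gamma^l_{kp}\Gamma^p_{ij},
\qquad
\Gamma^k_{ij}=\tfrac12 m^{k\ell}(\partial_i m_{j\ell}+\partial_j m_{i\ell}-\partial_\ell m_{ij}).
\]

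\textbf{Step 1: the inverse metric.} The hypothesis $c^{-1}\delta\le (m_{ij})\le c\,\delta$ gives $c^{-1}\delta\le (m^{ij})\le c\,\delta$. Hence every entry satisfies $|m^{ij}|\le c$. Differentiating $m^{ik}m_{kj}=\delta^i_j$ gives
\[
\partial m^{-1}=-m^{-1}(\partial m)m^{-1},
\]
so every entry of $\partial m^{ij}$ is bounded by $n^2c^2A$.

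\textbf{Step 2: the Christoffel symbols and their derivatives.} From the formula above, $|\Gamma|\le C(n)\,c\,A$ entrywise. Differentiating $\Gamma$ once produces two kinds of terms. The terms $m^{-1}\partial^2 m$ are bounded by $C(n)\,cB$. The terms $(\partial m^{-1})(\partial m)$ are bounded by $C(n)\,c^2A^2$. Therefore $|\partial\Gamma|\le C(n,c)(B+A^2)$ and $|\Gamma\Gamma|\le C(n,c)A^2$. Every coordinate component of $R^l{}_{ijk}$, and hence of $R_{lijk}=m_{lq}R^q{}_{ijk}$ (using $|m_{ij}|\le c$), is then bounded by $C(n,c)(B+A^2)$.

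\textbf{Step 3: from components to the $m$-norm.} We have
\[
|\Rm|_m^2=m^{aa'}m^{bb'}m^{cc'}m^{dd'}R_{abcd}R_{a'b'c'd'}.
\]
Each $m^{ij}$ has entries bounded by $c$, so this sum is at most $n^8c^4\max|R_{abcd}|^2$. Equivalently, the eigenvalues of $m^{-1}$ are at most $c$, so the norm is bounded by $c^2$ times the Euclidean norm of the components. Either way, $|\Rm_m|_m\le C_{\mathrm{curv}}(n,c)(B+A^2)$.

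The argument is purely algebraic and has no real obstacle. The only point needing care is Step 2: derivatives of the inverse metric must be controlled through the identity for $\partial m^{-1}$, so that they contribute $A^2$-type terms rather than requiring extra regularity. That is also where $C^2$ regularity is used, since $\partial\Gamma$ exists and is continuous.
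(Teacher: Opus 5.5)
Your proposal is correct and follows essentially the same route as the paper: bound $m^{ij}$ by ellipticity, control $\partial m^{ij}$ by differentiating $m^{ip}m_{pj}=\delta^i_j$, bound $\Gamma$ by $A$ and $\partial\Gamma$ by $B+A^2$, insert these into the coordinate formula for the curvature, and pass to the invariant norm using ellipticity. Your version only makes explicit two steps the paper compresses into ``by ellipticity'': the index lowering via $m_{lq}$ and the final contraction estimate $|\Rm_m|_m\le c^2|R|_{\mathrm{Eucl}}$.
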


\begin{proof}
Let $(m^{ij})$ be the inverse matrix. By ellipticity, we obtain $\sup_U|m^{ij}|\le C_0(n,c)$. Furthermore, notice that the Christoffel symbols satisfy $|\Gamma|\le C_1(n,c)A$.
We now differentiate $m^{ip}m_{pj}=\delta^i_j$ and get $|\partial m^{ij}|\le C_2(n,c)A$,
so $|\partial\Gamma|\le C_3(n,c)(A^2+B)$.
The curvature components are precisely:
\[
R^k{}_{ij\ell}
=
\partial_i\Gamma^k_{j\ell}-\partial_j\Gamma^k_{i\ell}
+\Gamma^k_{ip}\Gamma^p_{j\ell}-\Gamma^k_{jp}\Gamma^p_{i\ell},
\]
so $\sup|R^k{}_{ij\ell}|\le C_4(n,c)(A^2+B)$. By ellipticity, we can then transform this to the invariant norm, as required.
\end{proof}

\begin{lemma}[Croke]\label{lem:croke}
There exists a constant $v_n>0$ that depends only on $n$ such that the following holds. If $(N^n,m)$ is a compact Riemannian manifold without boundary, then for every $p\in N$ and every $0<r\le \inj(N,m)/2$, it holds that:
\[
\vol_m\bigl(B_m(p,r)\bigr)\ge v_n\,r^n.
\]
\end{lemma}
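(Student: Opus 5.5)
The plan is to reproduce Croke's isoembolic argument: combine Berger--Kazdan's sharp bound for the volume of a ball of radius half the injectivity radius with a Santal\'o-type integral-geometric formula.

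\textbf{Step 1: reduce to the critical radius.} Fix $p\in N$ and write $i:=\inj(N,m)$. Since $r\le i/2$, the exponential map at $p$ is a diffeomorphism on the open ball of radius $r$. Hence every geodesic starting at $p$ minimizes up to length $r$, and no cut points occur inside $B_m(p,r)$.

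\textbf{Step 2: Croke's local inequality.} Apply the Santal\'o formula on the unit tangent bundle of $B:=B_m(p,r)$. One integrates, over boundary inward directions, the length of the geodesic chord before it exits $B$. Each such chord is minimizing, because its length is at most $2r\le i$. A Berger--Kazdan-type integral inequality for $\int \det(\text{Jacobi field})$ along geodesics then compares $\vol(B)$ with the volume of a hemisphere of a round sphere. This yields
\[
\vol(B)\ge c_n\,\bigl(\text{inradius-type length}\bigr)^n ,
\]
and specialising to balls of radius $r\le i/2$ gives $\vol_m(B_m(p,r))\ge v_n r^n$ with $v_n=\frac{2^{n-1}\omega_{n-1}}{n\,\omega_n^{\,n-1}}\cdot$(normalisation). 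The precise constant does not matter here, only that it depends on $n$ alone.

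\textbf{Step 3: scale invariance.} Both sides scale as $\lambda^{n/2}$ under $m\mapsto\lambda m$, and the condition $r\le i/2$ is also scale invariant. This is consistent with Lemma~\ref{lem:scale} and confirms that $v_n$ can be taken universal.

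Since the statement is precisely Croke's Proposition~14 in \cite{Croke}, in the paper I would cite it rather than reprove it. The main obstacle in a self-contained proof would be Step~2. That step requires the Berger--Kazdan inequality, which is a delicate Jacobi-field and integral-geometric estimate, and which crucially uses the absence of conjugate points up to length $i$ along every geodesic. That is exactly why the hypothesis is on the global injectivity radius rather than only on $\inj_m(p)$.
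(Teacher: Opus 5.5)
Your proposal ends where the paper does: the paper's proof of this lemma is only a citation of Croke's result, and you also propose to cite Croke's Proposition~14, so the approach is the same. If you keep the sketch, note that Croke's route has an intermediate step your Step~2 skips. He first proves the isoperimetric inequality $\vol_m(\partial B_m(p,s))\ge c_n\,\vol_m(B_m(p,s))^{(n-1)/n}$ for every $s\le r$, with the sharp hemisphere constant $c_n^{\,n}=2^{n-1}\omega_{n-1}^{\,n}/\omega_n^{\,n-1}$; Santal\'o plus Berger--Kazdan apply here because every chord has length $<2s\le\inj(N,m)$. He then integrates $\frac{d}{ds}\vol_m(B_m(p,s))^{1/n}\ge c_n/n$, which gives $v_n=\frac{2^{n-1}\omega_{n-1}^{\,n}}{n^n\,\omega_n^{\,n-1}}$ rather than the constant you wrote.
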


\begin{proof}
The result is due to \cite{Croke}.
\end{proof}

\begin{lemma}[Cheeger--Gromov--Taylor]\label{lem:CGT}
Let $(N^n,m)$ be a connected complete Riemannian manifold with sectional curvature bounds
$\lambda\le \secop_m\le \Lambda$. Let $p\in N$ and let $r>0$ with $r<\pi/(4\sqrt{\Lambda})$ if $\Lambda>0$.
It then follows that:
\[
\inj_m(p)\ \ge\ r\,
\frac{\vol_m(B_m(p,r))}{\vol_m(B_m(p,r)) + V^n_\lambda(2r)},
\]
where $V^n_\lambda(\rho)$ denotes the volume of the radius-$\rho$ ball in the simply connected $n$-dimensional space form of constant sectional curvature $\lambda$.
\end{lemma}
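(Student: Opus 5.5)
This is the classical Cheeger--Gromov--Taylor estimate \cite{AM}, and in the paper it would simply be cited. If I were to reprove it, I would argue by lifting to the tangent space and counting preimages, as follows.

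\emph{Reduction via Klingenberg.} By Klingenberg's lemma, $\inj_m(p)$ is at least the minimum of the conjugate radius at $p$ and half the length $\ell_p$ of the shortest geodesic loop based at $p$. Since $\secop_m\le\Lambda$, Rauch comparison gives $\conjrad(p)\ge \pi/\sqrt{\Lambda}>4r$, and this term is infinite if $\Lambda\le 0$. So it suffices to show
\[
\tfrac12\ell_p \ge r\,\frac{\vol_m(B_m(p,r))}{\vol_m(B_m(p,r))+V^n_\lambda(2r)}
\]
whenever $\ell_p<2r$. If $\ell_p\ge 2r$, the claim is trivial because the right-hand side is less than $r$.

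\emph{Lifting.} Let $\tilde B:=B(0,4r)\subset T_pN$ carry the pulled-back metric $\tilde m:=\exp_p^*m$. Because $4r<\conjrad(p)$, the map $\exp_p\colon(\tilde B,\tilde m)\to N$ is a local isometry. It is also surjective onto $B_m(p,r)$, with path-lifting available for curves of length at most $3r$ starting in $B(0,r)$. By Günther--Bishop comparison applied in the tangent space, we have $\vol_{\tilde m}(B(0,2r))\le V^n_\lambda(2r)$, since the lower curvature bound is $\lambda$.

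\emph{Counting sheets.} Let $\gamma$ be a shortest geodesic loop at $p$, of length $\ell_p$. Lifting $\gamma$ and its iterates $\gamma^j$ from $0$ gives points $\tilde p_j\in\tilde B$ with $|\tilde p_j|_{\tilde m}\le j\ell_p$ and $\exp_p(\tilde p_j)=p$.
\begin{enumerate}
\item Set $N:=\lfloor r/\ell_p\rfloor$. For $0\le j\le N$, the $\tilde p_j$ are pairwise distinct. The reason is that a coincidence $\tilde p_i=\tilde p_j$ would make the loop $\gamma^{j-i}$ lift to a closed curve. Then $\gamma^{j-i}$ would be null-homotopic through loops of length less than $2r$, and this homotopy can be lifted inside $\tilde B$. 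That contradicts the minimality of $\gamma$ via the standard short-loop argument: the map $\gamma\mapsto\tilde p$ behaves like a pseudo-group action, so a shorter loop would appear.
\item Around each $\tilde p_j$, the set $\tilde U_j$ obtained by lifting $B_m(p,r)$ along radial geodesics from $\tilde p_j$ maps isometrically onto a set of full measure in $B_m(p,r)$. These sets lie in $B(0,r+N\ell_p)\subset B(0,2r)$.
\item After reducing to a suitable subfamily, the $\tilde U_j$ can be arranged to be disjoint up to measure zero. This gives
\[
(N+1)\vol_m(B_m(p,r))\le V^n_\lambda(2r).
\]
\end{enumerate}

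\emph{Conclusion.} Using $r/\ell_p < N+1$, the last inequality gives $\ell_p \ge r\,\vol_m(B_m(p,r))/V^n_\lambda(2r)$. After the bookkeeping in the step above is done carefully, one obtains the stated form with $\vol_m(B_m(p,r))+V^n_\lambda(2r)$ in the denominator.

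\emph{Main obstacle.} I expect the hardest step to be the disjointness and distinctness of the lifted sheets, i.e.\ showing that the $N+1$ lifts are genuinely different and carry essentially disjoint copies of $B_m(p,r)$. This is exactly where the hypothesis $r<\pi/(4\sqrt\Lambda)$ is used: it guarantees that $\exp_p$ is nonsingular on all of $B(0,4r)$. I would follow the argument of Cheeger--Gromov--Taylor closely here.
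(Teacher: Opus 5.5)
The paper gives no argument for this lemma. It simply quotes the first inequality of \cite[Theorem~3.7]{AM}, so your opening remark matches what the paper does. The reproof you sketch, however, has two genuine gaps, and neither is mere bookkeeping.

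\emph{The count is a factor two short.} Write $V_1:=\vol_m(B_m(p,r))$ and $V_2:=V^n_\lambda(2r)$. With only the forward lifts $\tilde p_0,\dots,\tilde p_N$ you get $(N+1)V_1\le V_2$. Together with $N+1>r/\ell_p$, this yields only $\inj_m(p)\ge \ell_p/2>\tfrac r2\,V_1/V_2$. Bishop gives $V_1\le V^n_\lambda(r)<V_2$; note $2r<\pi/\sqrt{\lambda}$ when $\lambda>0$. Hence your bound is strictly weaker than the claimed $r\,V_1/(V_1+V_2)$: the ratio is $2V_2/(V_1+V_2)>1$.

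The missing idea is to lift the backward iterates $\gamma^{-j}$ as well. The $2N+1$ points $\tilde p_{-N},\dots,\tilde p_N$ all lie in $\bar B(0,r)$, so $(2N+1)V_1\le V_2$. Since $2N+1>2r/\ell_p-1$, this rearranges exactly to $\ell_p/2>r\,V_1/(V_1+V_2)$. The $V_1$ in the denominator is precisely this $-1$.

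For the same reason you cannot afford to pass to a subfamily for disjointness, and you do not need to. If $\tilde p_i\neq\tilde p_j$, then $\tilde U_i\cap\tilde U_j=\emptyset$ automatically. Indeed, the lift from a common point $\tilde x$ of the minimal geodesic from $x$ back to $p$ is unique, so it would have to end at both $\tilde p_i$ and $\tilde p_j$.

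\emph{The distinctness argument does not work as stated.} A coincidence $\tilde p_i=\tilde p_j$ never produces a geodesic loop at $p$ shorter than $\gamma$. Every preimage $\tilde x\neq 0$ of $p$ in $\tilde B$ satisfies $|\tilde x|\ge \ell_p$, because the radial segment to $\tilde x$ projects to a geodesic loop at $p$. More generally, a power of $\gamma$ being trivial is compatible with $\gamma$ being the shortest loop, as in lens spaces. What excludes the local version is the geometry of small balls, not minimality.

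Here is what a coincidence actually gives. Let $\tau$ be the local isometry of $(\tilde B,\tilde m)$ with $\tau(0)=\tilde p_1$ and $\exp_p\circ\tau=\exp_p$. It maps lifts to lifts, so $\tilde p_j=\tau^j(0)$. It is injective on $\bar B(0,r)$, since its inverse there is the analogous map built from $\gamma^{-1}$. A coincidence therefore forces $\tau^k(0)=0$ for some $1\le k\le 2N$. Then $\tau^k=\mathrm{id}$, because $d(\tau^k)_0=\mathrm{id}$.

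Ruling out such a nontrivial finite-order local isometry, whose orbit of $0$ lies in $\bar B(0,r)$, requires a genuinely geometric input from $\secop_m\le\Lambda$ on $B(0,4r)$. One option is a center-of-mass argument: it gives a fixed point of $\tau$ in a convex $\tilde m$-ball containing the orbit. A fixed point forces $\tau=\mathrm{id}$, contradicting $\tilde p_1\neq 0$. This is the real content of the Cheeger--Gromov--Taylor argument, and your sketch replaces it with a justification that does not go through.
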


\begin{proof}
This is the first inequality in \cite[Theorem~3.7]{AM}.
\end{proof}

\noindent We now have all the auxiliary lemmas in place. Let us now consider the proof of the main theorem.

\begin{proof}[Proof of Theorem~\ref{thm:main}]
We understand all inequalities between metrics in the sense of quadratic forms. For a $(0,2)$-tensor $T$, we compute the pointwise norm $|T|_m$ with respect to the metric $m$. If $M$ is not connected, we apply the argument below to each connected component. Given the fact that the hypotheses and the constants depend only on $(n,\ell,k)$, the constants we obtain are uniform. Therefore, we assume that $M$ is connected.

\medskip\noindent
We first scale so that $\inj(M,\bar g)\ge 1$. Note that compactness holds.
Given that $\Ric_g\ge k g$ with $k>0$, we set $H:=k/(n-1)>0$ so that
$\Ric_g\ge (n-1)H\,g$. By Bonnet--Myers, we know that $(M,g)$ has finite diameter, and since it is complete, the Hopf--Rinow theorem implies that $(M,g)$ is compact (in equivalent terms, it is closed).

We set:
\[
\bar g := \ell^{-2} g.
\]
By Lemma~\ref{lem:scale}, we obtain:
\begin{equation}\label{eq:injbar}
\inj(M,\bar g)=\ell^{-1}\inj(M,g)\ge 1.
\end{equation}
It also holds that $\Ric_{\bar g}=\Ric_g$ as $(0,2)$-tensors, and so
\[
\Ric_{\bar g}\ge k\,g = k\ell^2\,\bar g,
\]
so, in particular, $\Ric_{\bar g}\ge 0$ holds.

It suffices to prove the theorem for $(M,\bar g)$ with $\inj(M,\bar g)\ge 1$. Note that if
$\bar h$ satisfies
\[
C^{-1}\bar g \le \bar h \le C\bar g,\qquad
\inj(M,\bar h)\ge \bar L,\qquad
|\Ric_{\bar h}|_{\bar h}\le \bar K,
\]
then setting $h:=\ell^2\bar h$ and applying Lemma~\ref{lem:scale} gives
\[
\begin{aligned}
C^{-1} g &\le h \le C g,\\
\inj(M,h)&=\ell\,\inj(M,\bar h)\ge \ell\bar L,\\
|\Ric_h|_h&=\ell^{-2}|\Ric_{\bar h}|_{\bar h}\le \ell^{-2}\bar K.
\end{aligned}
\]

\medskip\noindent
We will now consider a conjugate-radius lower bound.
To this end, for $p\in M$, let $\conjrad_{\bar g}(p)$ denote the conjugate radius at $p$.
Observe that if $\exp_p$ is a diffeomorphism on the open $\bar g$-ball of radius $r$ in $T_pM$, then $\exp_p$ has no critical points there. This implies that $r\le \conjrad_{\bar g}(p)$. We then obtain:
\[
\inj_{\bar g}(p)\le \conjrad_{\bar g}(p)\qquad\forall p\in M.
\]
We apply \eqref{eq:injbar} and get
\begin{equation}\label{eq:conjbar}
\conjrad_{\bar g}(p)\ge 1\qquad\forall p\in M.
\end{equation}

\medskip\noindent
We now work towards establishing weak harmonic $C^{0,\alpha}$ control.
Let us first fix
\[
p_0:=2n>n,\qquad \alpha:=1-\frac{n}{p_0}=\frac12.
\]
Note that by \eqref{eq:conjbar} and $\Ric_{\bar g}\ge 0$, we may apply the results from \cite{PWY} (if we take $H=0$ and $\rho=1$). More precisely, there exists a function
$Q_1:(0,1]\to(0,\infty)$ with $\lim_{r\downarrow 0}Q_1(r)=0$ such that for every $r\in(0,1]$, the following is true:
\begin{equation}\label{eq:weak-harmonic-L1p}
\|(M,\bar g)\|_{W;hL^{1,p_0},r}\ \le\ Q_1(r),
\end{equation}
where $\|\cdot\|_{W;hL^{1,p_0},r}$ denotes the \emph{weak harmonic} $L^{1,p_0}$-norm on scale $r$ in the sense of \cite{PWY}. We stress that this is the weak \emph{harmonic}
$L^{1,p_0}$-norm (not just the weak $L^{1,p_0}$-norm). The results from \cite{PWY} provide the required harmonic coordinate charts. We also absorb absolute factors into $Q$. Finally, if we replace $Q_1$ by its monotone envelope, we may assume $Q_1$ is nondecreasing.

Let us fix $r\in(0,1]$.
We now analyze the definition of $\|(M,\bar g)\|_{W;hL^{1,p_0},r}$ \cite{PWY} in more detail. \eqref{eq:weak-harmonic-L1p} means that for each $p\in M$, there exists a local diffeomorphism
$\varphi:B(0,r)\to U\subset M$ with $p\in U$ with the property that the pullback metric coefficients
$(\bar g_{ij})=(\varphi^*\bar g)_{ij}$ satisfy:
\begin{equation}\label{eq:L1p-chart-bounds}
e^{-Q_1(r)}\delta \le (\bar g_{ij}) \le e^{Q_1(r)}\delta,
\qquad
r^{1-\frac{n}{p_0}}\|\partial \bar g_{ij}\|_{L^{p_0}(B(0,r))}\le Q_1(r),
\end{equation}
in the sense of \cite{PWY}. We keep the normalization $e^{\pm Q_1(r)}$.

In particular, the quadratic-form bounds imply that
\[
|\bar g_{ij}|\le \sqrt{\bar g_{ii}\bar g_{jj}}\le e^{Q_1(r)} \qquad\text{on }B(0,r).
\]

We now apply Lemma~\ref{lem:morrey} componentwise to $\bar g_{ij}$ and obtain
\[
r^{\alpha}\,[\bar g_{ij}]_{C^{0,\alpha}(B(0,r))}\le C_{\mathrm{M}}(n,p_0)\,Q_1(r).
\]
It follows that
\[
\begin{aligned}
r^{\alpha}\|\bar g_{ij}\|_{C^{0,\alpha}(B(0,r))}
&\le r^{\alpha}\|\bar g_{ij}\|_{L^\infty(B(0,r))}\\
&\qquad + r^{\alpha}[\bar g_{ij}]_{C^{0,\alpha}(B(0,r))}\\
&\le r^{\alpha}e^{Q_1(r)} + C_{\mathrm{M}}Q_1(r).
\end{aligned}
\]
We now define, for $0<r\le 1$,
\[
Q_0(r):= \max\Bigl\{\,Q_1(r),\ r^{\alpha}e^{Q_1(r)} + C_{\mathrm{M}}Q_1(r)\Bigr\}.
\]
We infer that $Q_0$ is nondecreasing and $\lim_{r\downarrow 0}Q_0(r)=0$. We extend $Q_0$ to
$(0,\infty)$ by setting $Q_0(r):=Q_0(1)$ for $r\ge 1$. Note that this keeps $Q_0$ nondecreasing. Given that $Q_0(r)\ge Q_1(r)$ holds, we may apply the same harmonic charts as in
\eqref{eq:weak-harmonic-L1p}. The charts are unchanged,
so they remain harmonic. Moreover, the ball-containment condition in the definition
of the weak norm is preserved, because if we replace $Q_1$ by $Q_0\ge Q_1$, we merely shrink the
required ball. It follows that
\begin{equation}\label{eq:weak-harmonic-C0a}
\|(M,\bar g)\|_{W;hC^{0,\alpha},r}\ \le\ Q_0(r)\qquad\forall r\in(0,1].
\end{equation}
So we have $(M,\bar g)\in M(n,\alpha,Q_0)$ in the sense of \cite[Definition~1]{PWY}.

\medskip\noindent
We now apply the smoothing theorem of Petersen--Wei--Ye \cite[Theorem~1.1]{PWY}
to $(M,\bar g)\in M(n,\alpha,Q_0)$. To this end, fix $\varepsilon=1$ and $m=2$.
We observe that Theorem~1.1 is uniform for all $0<r\le 1$, and the parameter $r$ occurs only in the weak norms
$\|\cdot\|_{W;C^{k,\alpha},r}$. We use the weak $C^{2,\alpha}$ bound at scale $r=1$,
so we take $Q_0(1)$ in the constants. Theorem~1.1 is valid for an arbitrary integer $m\ge 2$. Most importantly, its smoothing construction
\cite{PWY} forms a smooth metric with uniform $C^{m,\alpha}$ bounds.
In the argument below, we only use the $C^{2,\alpha}$ bounds (so we take $m=2$).
We arrive at a Riemannian metric $\bar h$ on $M$ with the property that
\begin{equation}\label{eq:bilip}
e^{-1}\bar g \le \bar h \le e\,\bar g,
\end{equation}
and such that its weak $C^{2,\alpha}$-norm on scale $1$ is bounded:
\begin{equation}\label{eq:weakC2}
\|(M,\bar h)\|_{W;C^{2,\alpha},1}\le Q_2,
\end{equation}
where $Q_2$ depends only on $n$, $\alpha$, $\varepsilon$ (here $1$), $m=2$, and $Q_0(1)$.

We now consider \eqref{eq:weakC2}, that is, for each $p\in M$, there exist a local diffeomorphism
$\varphi:B(0,1)\subset\mathbb{R}^n\to U\subset M$ and a point $\tilde p\in B(0,1)$ with
$\varphi(\tilde p)=p$ such that, if we write $(\bar h_{ij})=(\varphi^\ast\bar h)_{ij}$ on $B(0,1)$, we get:
\begin{align}
e^{-Q_2}\delta &\le (\bar h_{ij}) \le e^{Q_2}\delta, \label{eq:elliptic}\\
\max_{|\beta|\le 2}\sup_{B(0,1)}|\partial^\beta \bar h_{ij}|
\;+\;
[\partial^2 \bar h_{ij}]_{C^\alpha(B(0,1))}
&\le Q_2 . \label{eq:C2bounds}
\end{align}

\medskip\noindent
We next work towards determining the uniform curvature and Ricci bounds for $\bar h$.
We fix $p\in M$ and choose $\varphi$ as in the smoothing construction above. We also apply Lemma~\ref{lem:curv-coeff} to
$m=\varphi^\ast \bar h$ on $B(0,1)$, using \eqref{eq:elliptic}--\eqref{eq:C2bounds}. It follows that there exists
$\Lambda=\Lambda(n,Q_2)$ such that
\[
|\Rm_{\bar h}|_{\bar h}(p)\le \Lambda.
\]
Given that $p$ was arbitrary, it follows that
\begin{equation}\label{eq:Rm-bound}
|\Rm_{\bar h}|_{\bar h}\le \Lambda \qquad\text{on }M.
\end{equation}

Observe that since $\Ric$ is a contraction of $\Rm$, there exists $c_{\mathrm{Ric}}=c_{\mathrm{Ric}}(n)>0$
such that
\begin{equation}\label{eq:Ric-bound}
|\Ric_{\bar h}|_{\bar h}\le c_{\mathrm{Ric}}\,|\Rm_{\bar h}|_{\bar h}\le c_{\mathrm{Ric}}\,\Lambda \;=:\; \bar K.
\end{equation}
We also know that $|\sec_{\bar h}|\le |\Rm_{\bar h}|_{\bar h}$, and so
\begin{equation}\label{eq:sec-bounds}
-\Lambda \le \secop_{\bar h}\le \Lambda \qquad\text{on }M.
\end{equation}

\medskip\noindent
We now obtain a uniform lower volume bound for $\bar h$-balls.
We apply Lemma~\ref{lem:croke} to $(M,\bar g)$ and use \eqref{eq:injbar} to get, for all $p\in M$ and
all $0<r\le 1/2$,
\begin{equation}\label{eq:volgbar-lower}
\vol_{\bar g}\bigl(B_{\bar g}(p,r)\bigr)\ge v_n\,r^n.
\end{equation}

We also apply the bi-Lipschitz comparison \eqref{eq:bilip}. By Lemma~\ref{lem:bilip} with $C=e$, it follows that
$d_{\bar h}\le e^{1/2}d_{\bar g}$, so
\[
B_{\bar g}\bigl(p,e^{-1/2}r\bigr)\subset B_{\bar h}(p,r).
\]
We also have $d\vol_{\bar h}\ge e^{-n/2}d\vol_{\bar g}$. Therefore, for $0<r\le 1/2$,
\begin{equation}\label{eq:volhbar-lower}
\vol_{\bar h}\bigl(B_{\bar h}(p,r)\bigr)\ge v_n e^{-n}\,r^n.
\end{equation}

\medskip\noindent
It remains to estimate the injectivity radius of $\bar h$ using Cheeger--Gromov--Taylor.
We first apply Lemma~\ref{lem:CGT} to $(M,\bar h)$, use \eqref{eq:sec-bounds}, and take $\lambda=-\Lambda$.
Let us define:
\[
r_* :=
\begin{cases}
\min\bigl\{\,\frac12,\ \frac{\pi}{8\sqrt{\Lambda}}\,\bigr\}, & \Lambda>0,\\[4pt]
\frac12, & \Lambda=0.
\end{cases}
\]
It follows that $0<r_*\le 1/2$, and if $\Lambda>0$, we have $r_*<\pi/(4\sqrt{\Lambda})$.

By \eqref{eq:volhbar-lower}, we get:
\[
\vol_{\bar h}(B_{\bar h}(p,r_*))\ge v_n e^{-n} r_*^n.
\]
We infer that Lemma~\ref{lem:CGT} gives, for every $p\in M$,
\[
\inj_{\bar h}(p)\ \ge\ r_*\,
\frac{v_n e^{-n} r_*^n}{v_n e^{-n} r_*^n + V^n_{-\Lambda}(2r_*)}
\ =:\ \bar L.
\]
In particular, it is true that
\begin{equation}\label{eq:injhbar}
\inj(M,\bar h)\ge \bar L>0.
\end{equation}

\medskip\noindent
Finally, let us now return to the original scale.
We set $h:=\ell^2\bar h$. By \eqref{eq:bilip}, we have
\[
e^{-1}g \le h \le e\,g,
\]
so we may take $C:=e$. By \eqref{eq:injhbar} and Lemma~\ref{lem:scale},
\[
\inj(M,h)=\ell\,\inj(M,\bar h)\ge \ell\,\bar L,
\]
so we take $L:=\ell\,\bar L$. Finally, by \eqref{eq:Ric-bound} and Lemma~\ref{lem:scale},
\[
|\Ric_h|_h=\ell^{-2}|\Ric_{\bar h}|_{\bar h}\le \ell^{-2}\bar K,
\]
so we set $K:=\ell^{-2}\bar K$. This concludes the proof.
\end{proof}

\begin{remark}
Note that we used the Bonnet--Myers theorem only to obtain compactness, which, in turn, was required to apply Croke's result. In the rest of the proof, we used $\Ric_{\bar g}\ge 0$ with the lower injectivity
radius bound. If we were to assume compactness separately and still assume $\Ric_{\bar g}\ge 0$ and $\inj_{\bar g}\ge 1$ (in other terms, $\Ric_g\ge 0$ and
$\inj_g\ge \ell$ before scaling), then we could take the constants to depend only on $n$ and $\ell$.
\end{remark}

\section*{Statements and Declarations}
\subsection*{Competing Interests}
The author declares that there are no competing interests.

\end{document}